\def\D#1{D_{\!#1}}
\def\d{\partial}
\def\Z{\mathbbm Z}
\def\R{\mathbbm R}
\def\C{\mathbbm C}
\def\rmd{\,\mathrm{d}}  
\def\0{{\bm 0}}
\def\a{{\bm a}}
\def\e{{\bm e}}
\def\k{{\bm k}}
\def\s{{\bm s}}
\def\x{{\bm x}}
\def\y{{\bm y}}
\renewcommand{\leq}{\leqslant}
\renewcommand{\geq}{\geqslant}
\newtheorem{theorem}{Theorem}
\newtheorem{corollary}[theorem]{Corollary}
\theoremstyle{definition}
\newtheorem{example}{Example}
\newcommand\xqed[1]{%
  \leavevmode\unskip\penalty9999 \hbox{}\nobreak\hfill
  \quad\hbox{#1}}
\newcommand\exampleEnd{\xqed{$\qed$}}
\begin{document}

\title{\vskip -10mm Lattice Green's Functions of the Higher-Dimensional Face-Centered Cubic Lattices}

\author{Christoph~Koutschan\\        
  $\!$Johann Radon Institute for Computational and Applied Mathematics$\!$\\
  Austrian Academy of Sciences\\
  Altenberger Stra\ss e 69, A-4040 Linz, Austria}

\maketitle

\begin{abstract}
  We study the face-centered cubic lattice (fcc) in up to six dimensions.  In
  particular, we are concerned with lattice Green's functions (LGF) and return
  probabilities. Computer algebra techniques, such as the method of creative
  telescoping, are used for deriving an ODE for a given LGF. For the four- and
  five-dimensional fcc lattices, we give rigorous proofs of the ODEs that were
  conjectured by Guttmann and Broadhurst. Additionally, we find the ODE of the
  LGF of the six-dimensional fcc lattice, a result that was not believed to be
  achievable with current computer hardware.
  \footnote{This article appeared in J. Phys. A: Math. Theor. \textbf{46} (2013) 125005.\\
    DOI: 10.1088/1751-8113/46/12/125005,
    http:/$\!$/iopscience.iop.org/1751-8121/46/12/125005}
\end{abstract}

\noindent{\it Keywords}: bravais lattice, random walk,
lattice Green's function, return probability,
differential equation, symbolic integration,
holonomic system, creative telescoping

\vspace{0.5pc}
\noindent{\it MSC classification}:
82B41, 
06B05, 
33F10, 
68W30, 
05A15  

\section{Introduction}\label{sec.intro}

Random walks on lattices, such as the face-centered cubic lattice, are an
important concept in various applications in physics, chemistry, ecology,
economics, and computer science, when lattice vibration problems (phonons),
diffusion models, luminescence, Markov processes and other random processes
are studied. 
A fundamental object to investigate is the probability generating function of
a lattice, called the lattice Green's function (LGF).  For example, the return
probability of a lattice can be expressed in terms of the LGF. The LGFs of
three-dimensional lattices have been computed and analyzed
in~\cite{Joyce98,Guttmann09}, for higher-dimensional lattices
see~\cite{Broadhurst09,Guttmann10}. We present a completely different approach
to LGFs that is based on computer algebra techniques, with which we are not
only able to confirm independently the previously known results, but also go
beyond. We believe that this methodology can be applied successfully to many
other, yet unsolved problems of similar flavor, and therefore should be
popularized in the community.

The paper is organized as follows. In Section~\ref{sec.intro} we explain the
general setting in which we work and introduce basic notions, such as Bravais
lattice (Section~\ref{sec.bravais}), random walk (Section~\ref{sec.random}),
and lattice Green's function (Section~\ref{sec.lgf}); in
Section~\ref{sec.diffeq} an approach to LGFs via differential equations is
motivated. We derive the integral representation of the LGF and show how a
differential equation is connected to it. Section~\ref{sec.em} is dedicated to
different methods how to compute the ODE of a LGF in a nonrigorous way:
Section~\ref{sec.intrep} reviews the method for computing Taylor coefficients
used in~\cite{Broadhurst09,Guttmann10} and Sections~\ref{sec.count}
and~\ref{sec.multi} count random walks in order to obtain sufficient data to
construct the ODE. The main contribution of our work is contained in
Section~\ref{sec.ca}.  The method of creative telescoping is described in
Section~\ref{sec.ct}; it enables us to compute the desired ODEs in a
mathematical rigorous way, including correctness certificates. Applying this
method to the LGF of the fcc lattice confirms the results
of~\cite{Broadhurst09,Guttmann10} in dimensions four and five, and yields an
ODE for the LGF of the six-dimensional fcc lattice that was not known
previously (see Section~\ref{sec.results}).

\subsection{Bravais Lattices}\label{sec.bravais}

We consider lattices in $\R^d$ that are given as infinite
sets of points
\[
  \bigg\{\sum_{i=1}^d n_i\a_i: n_1,\dots,n_d\in\Z\bigg\} \subseteq\R^d
\]
for some linearly independent vectors $\a_1,\dots,\a_d\in\R^d$
(throughout this paper, vectors are denoted by bold letters).  In
three dimensions such lattices are called \emph{Bravais lattices}.
The simplest instance of such a lattice is obtained by choosing
$\a_i=\e_i$, the $i$-th unit vector; the result is the integer
lattice~$\Z^d$ which is also called the \emph{square lattice} (for
$d=2$), or the \emph{cubic lattice} (for $d=3$), or the
\emph{hypercubic lattice} (for $d\geq 4$).

The \emph{face-centered (hyper-) cubic lattice} (fcc lattice) is
obtained from the (hyper-) cubic lattice by adding the center point of
each (two-dimensional) face to the set of lattice points. In two
dimensions this operation is trivial: the faces of the square lattice
$\Z^2$ are all unit squares with corners
$(m,n),(m+1,n),(m+1,n+1),(m,n+1)$ for integers $m,n\in\Z$. Their
center points are $\Z^2+(\frac12,\frac12)$ which together with $\Z^2$
again yields a square lattice, more precisely a copy of $\Z^2$ which
is rotated by 45 degrees and shrunk by a factor of $\sqrt{2}$. The
situation becomes more interesting in higher dimensions. For example,
in three dimensions there are 6 faces of the unit cube, and their
center points together with all integral translates have to be
included.  It is not difficult to see that the three-dimensional fcc
lattice consists of four copies of $\Z^3$, namely
\[
  \textstyle\Z^3
    \cup\left(\Z^3+\left(\frac12,\frac12,0\right)\right)
    \cup\left(\Z^3+\left(\frac12,0,\frac12\right)\right)
    \cup\left(\Z^3+\left(0,\frac12,\frac12\right)\right).
\]
Similarly the fcc lattice in four dimensions consists of 7 copies of
$\Z^4$, and in general the $d$-dimensional fcc lattice is composed of
$1+\binom{d}{2}$ translated copies of $\Z^d$.  

The study of Bravais lattices was inspired by crystallography in as much as
the atomic structure of crystals forms such regular lattices. While
the cubic lattice is quite rarely found in nature (e.g., in polonium)
due to its small \emph{atomic packing factor} (the proportion of space
that is filled when a sphere of maximal radius is put on each lattice
point, in a way that these spheres do not overlap), the fcc lattice is
more often encountered, for example, in aluminium, copper, silver, and
gold.  The atomic packing factor of the fcc lattice is
$\sqrt{2}\pi/6$, the highest possible value as was shown by 
Hales in his famous proof of the Kepler conjecture~\cite{Hales05}.

\subsection{Random Walks}\label{sec.random}
For the sake of simplicity, the fcc lattice as introduced in the previous
section, is stretched by a factor of~$2$ in all coordinate directions so that
all lattice points have integral coordinates. This convention is kept
throughout the paper as it does not change the relevant quantities that we are
interested in (e.g., the return probability, see below).

The aim of this paper is to study random walks on the fcc lattice in
several dimensions. We consider walks that allow only steps to the
nearest neighbors of a point (with respect to the Euclidean metric).
Furthermore it is assumed that all steps are taken with the same
probability. For example, consider a point $(k,m,n)$ in the
three-dimensional cubic lattice $(2\Z)^3$. It is the common corner
point of 8 cubes.  The nearest neighbors in the 3D fcc lattice are
then the center points of some of those faces which have $(k,m,n)$ as a
corner point. Note that they all have distance $\sqrt{2}$ whereas the
other corner and face-center points are farther away (their distance
is $\geq2$) and hence not reachable in a single step. Thus the number
of possible steps is $8\cdot 3/2=12$ (number of adjacent cubes times
the number of adjacent faces per cube, divided by two since each face
belongs to two cubes).  The same situation is encountered at the
center point of some face and hence every point in the 3D fcc lattice
has exactly 12 nearest neighbors; this number is called the
\emph{coordination number} of the lattice.

The above considerations can be generalized to arbitrary dimensions in
a straight-forward manner; one finds that the set of permitted steps
in the $d$-dimensional fcc lattice is given by
\begin{equation}\label{eq.steps}
  \textstyle\left\{(s_1,\dots,s_d)\in\{0,-1,1\}^d:|s_1|+\dots+|s_d|=2\right\}
\end{equation}
and thus its coordination number is $4\binom{d}{2}$.

\subsection{Lattice Green's Function}\label{sec.lgf}
Let $p_n(\x)$ denote the probability that a random walk which started
at the origin~$\0$ ends at point~$\x$ after $n$~steps.  Note that in
our setting of unrestricted walks, $c^np_n(\x)$ is an integer and
gives the total number of walks that end at location~$\x$ after
$n$~steps, where $c$ is the coordination number of the lattice.

In order to achieve information about random walks on the fcc lattice,
the following multivariate generating function is introduced:
\begin{equation}\label{eq.gfx}
  P(\x;z) = \sum_{n=0}^\infty p_n(\x)z^n.
\end{equation}
This function is called the \emph{lattice Green's function} (LGF).  
By defining the \emph{structure function}
$\lambda(\k)=\lambda(k_1,\dots,k_d)$ of a lattice to be the discrete
Fourier transform
\[
  \lambda(\k) = \sum_{\x\in\R^d} p_1(\x)e^{i\x\cdot\k}
\]
of the single-step probability function~$p_1(\x)$, the generating 
function~\eqref{eq.gfx} can be expressed as the $d$-dimensional integral
\[
  P(\x;z) = \frac{1}{\pi^d}\int_0^\pi\dots\int_0^\pi 
    \frac{e^{i\x\cdot\k}}{1-z\lambda(\k)}\rmd k_1\dots\rmd k_d.
\]
We shall be interested in walks which return to the origin and which
we therefore call \emph{excursions}. The LGF for excursions is given
by
\begin{equation}\label{eq.lgf}
  P(\0;z) = \sum_{n=0}^\infty p_n(\0)z^n = 
            \frac{1}{\pi^d}\int_0^\pi\dots\int_0^\pi
              \frac{\rmd k_1\dots\rmd k_d}{1-z\lambda(\k)}.
\end{equation}
In the following, we will only refer to this special instance when talking
about LGFs.  This function allows one to calculate the \emph{return
  probability}~$R$, sometimes also referred to as the \emph{P\'{o}lya number},
of the lattice.  It signifies the probability that a random walk that started
at the origin will eventually return to the origin. It can be computed via the
formula
\begin{equation}
  R = 1-\frac{1}{P(\0;1)} = 1-\frac{1}{\sum_{n=0}^{\infty} p_n(\0)}.
\end{equation}
\begin{example}\label{ex.sq}
Consider the square lattice~$\Z^2$ which admits the steps $(-1,0)$,
$(1,0)$, $(0,-1)$, and $(0,1)$. Its structure function is
\[
  \textstyle\lambda(k_1,k_2) = \frac14\left(e^{-ik_1}+e^{ik_1}+e^{-ik_2}+e^{ik_2}\right)
                   = \frac12\left(\cos k_1+\cos k_2\right).
\]
and therefore its LGF is (see, e.g., \cite{Guttmann10})
\[
  P(0,0;z) = \frac{1}{\pi^2} \int_0^\pi\int_0^\pi\frac{\rmd k_1\rmd k_2}
               {1-\frac{z}{2}\left(\cos k_1+\cos k_2\right)}
           = \frac{2}{\pi}\mathbf{K}(z^2)
\]
where $\mathbf{K}(z)$ is the complete elliptic integral of the first
kind. The fact that the above integral diverges for $z=1$ immediately
implies that the return probability $R=1$; in other words that every
random walk will eventually return to the origin, a result that was
already proven in 1921 by P\'{o}lya~\cite{Polya21}.  
\exampleEnd
\end{example}

\begin{example}\label{ex.fcc2}
We have already remarked that the two-dimensional fcc lattice is
nothing else but a rotated and stretched version of the square
lattice.  Nevertheless let's have a look at the LGF when the step set
$\{(-1,-1),(-1,1),(1,-1),(1,1)\}$ is taken. Its structure function is
\begin{eqnarray*}
  \lambda(k_1,k_2) 
    & = & \textstyle\frac14\big(e^{-i(k_1+k_2)}+e^{-i(k_1-k_2)}+e^{i(k_1-k_2)}+e^{i(k_1+k_2)}\big)\\
    & = & \textstyle\frac12\big(\cos(k_1+k_2)+\cos(k_1-k_2)\big) = \cos k_1\cos k_2,
\end{eqnarray*}
using the well-known angle-sum identity 
$\cos(x \pm y) = \cos x\cos y \mp \sin x\sin y$.
Although the structure function differs from that in
Example~\ref{ex.sq} the LGF is the same:
\begin{equation}\label{eq.lgf2d}
  P(0,0,z) = \frac{1}{\pi^2} \int_0^\pi\int_0^\pi\frac{\rmd k_1\rmd k_2}{1-z\cos k_1\cos k_2}
           = \frac{2}{\pi}\mathbf{K}(z^2)
\end{equation}
as shown in Equation (6) of~\cite{Guttmann10}.  Note also that the
different distances between nearest-neighboring lattice points---$1$
in Example~\ref{ex.sq} and $\sqrt{2}$ in Example~\ref{ex.fcc2}---carry
no weight since only excursions (and not walks with arbitrary end
points) are investigated.
\exampleEnd
\end{example}

It is now an easy exercise to compute the structure function~$\lambda(\k)$
for the $d$-dimensional fcc lattice:
\[
  \lambda(\k) = \binom{d}{2}^{-1}\sum_{m=1}^d\sum_{n=m+1}^d \cos k_m\cos k_n.
\]
The LGF is then given as the $d$-fold integral~\eqref{eq.lgf}
and the return probability can be computed by integrating over
$1/(1-\lambda(\k))$

\subsection{The Differential Equation Detour}\label{sec.diffeq}
The return probability in the fcc lattice in three dimensions was
first computed by Watson~\cite{Watson39} as one of the three
integrals which were later named after him, and which give the return
probabilities in different three-dimensional lattices. These
probabilities can be expressed in terms of algebraic numbers, $\pi$,
and values of the Gamma function at rational arguments. For example, the
probability of returning to the origin in the 3D fcc lattice is given
by
\[
  1 - \frac{16\sqrt[3]{4}\pi^4}{9\big(\Gamma(\frac13)\big)^6}.
\]
For the three-dimensional fcc lattice, Joyce expressed the lattice Green's
function also in terms of complete elliptic integrals~\cite{Joyce98}, but the
expression is fairly complicated and for the higher-dimensionsal fcc lattices
no such evaluation is known at all.  Similarly we don't know of any
closed-form representation of the return probabilities in higher dimensions.

Instead we will derive differential equations for the corresponding
LGFs. Although less explicit than the previously mentioned closed-form
results, such an implicit representation of the LGF provides considerable 
insight. It allows one to compute the number of excursions efficiently
for any fixed number of steps, as well as the return
probability with very high precision (see Section~\ref{sec.ca}). But
also the differential equations themselves reveal very interesting
properties that are worth investigation.

To motivate our approach and to illuminate the origin of
Equation~\eqref{eq.lgf}, consider an arbitrary lattice in~$\Z^d$ with
some finite set~$S\subset\Z^d$ of permitted steps. Then clearly the
probability function~$p_n(\x)$ satisfies the constant-coefficient
recurrence
\begin{equation}\label{eq.trivrec}
  p_{n+1}(\x) = \frac{1}{|S|}\sum_{\s\in S}p_n(\x-\s).
\end{equation}
Let~$F(\y;z)$ denote the multivariate generating function 
\[
  F(\y;z)=\sum_{n=0}^\infty\sum_{\x\in\Z^d}p_n(\x)\y^{\x}z^n.
\]
Multiplying both sides of~\eqref{eq.trivrec} by~$\y^{\x}z^n$
and summing with respect to $n$ and $\x$ gives
\begin{eqnarray*}
\sum_{n=0}^{\infty}\sum_{\x\in\Z^d}p_{n+1}(\x)\y^{\x}z^n & = &
  \frac{1}{|S|}\sum_{n=0}^{\infty}\sum_{\x\in\Z^d}\sum_{\s\in S}p_n(\x-\s)\y^{\x}z^n\\
\frac{1}{z}\sum_{n=1}^{\infty}\sum_{\x\in\Z^d}p_n(\x)\y^{\x}z^n & = &
  \frac{1}{|S|}\sum_{\s\in S}\sum_{n=0}^{\infty}\sum_{\x\in\Z^d}p_n(\x)\y^{\x+\s}z^n\\
\frac{1}{z}\left(F(\y;z)-1\right) & = &
  \frac{1}{|S|}\sum_{\s\in S}\y^{\s}F(\y;z)
\end{eqnarray*}
Thus we obtain
\[
  F(\y;z) = \frac{1}{1-\frac{z}{|S|}\sum_{\s\in S}\y^{\s}}
\]
and the LGF $P(\0;z)$ is nothing
else but the constant term $\langle\y^{\0}\rangle F(\y;z)$. A differential
equation for this expression can be derived from an operator of the form
\begin{equation}\label{eq.ctmult}
  A(z,\D{z}) + \D{y_1}B_1 + \dots + \D{y_d}B_d
\end{equation}
that annihilates the expression $F(\y;z)/(y_1\cdots y_d)$.
Here the symbol~$\D{x}$ denotes the partial derivative w.r.t.~$x$
and the $B_j$ are differential operators that may involve 
$y_1,\dots,y_d,z$ as well as $\D{y_1},\dots,\D{y_d},\D{z}$.
The fact that~$A$ may only depend on $z$ and $\D{z}$ is crucial
and therefore explicitly indicated. In Section~\ref{sec.ca}
we will discuss how to find such an operator. From
\[
  \big\langle y_1^{-1}\cdots y_d^{-1}\big\rangle A(z,\D{z})\frac{F(\y,z)}{y_1\cdots y_d}\> +\>
  \sum_{j=1}^d \big\langle y_1^{-1}\cdots y_d^{-1}\big\rangle \D{y_j}B_j\frac{F(\y;z)}{y_1\cdots y_d} = 0
\]
and the fact that the coefficient of $y^{-1}$ in an expression of
the form $\D{y}\sum_{n=-\infty}^\infty a_ny^n$ is always zero, it follows
that $A\big(\langle\y^{\0}\rangle F(\y;z)\big)=A\big(P(\0;z)\big)=0$.
Also in Section~\ref{sec.ca} we will demonstrate how the
operator~\eqref{eq.ctmult} is used to derive a differential equation
for the $d$-fold integral
\[
  \int\cdots\int \frac{d\y}{(y_1\cdots y_d)(1-\frac{z}{|S|}\sum_{\s\in S}\y^{\s})} = 
  \int\cdots\int \frac{d\k}{1-z\lambda(\k)}.
\]

\section{An Experimental Mathematics Approach}\label{sec.em}
This section presents some results that were obtained in a
non-rigorous way using the method of guessing~\cite{Kauers09}. 
That is, for finding a linear differential equation
\[
  c_m(x)f^{(m)}(x)+\dots+c_1(x)f'(x)+c_0(x)f(x)=0
\]
for a certain function~$f(x)$, one computes the first terms of the Taylor
expansion of~$f(x)$ and then makes an ansatz with undetermined polynomial
coefficients~$c_j(x)$. If the resulting linear system is overdetermined (i.e.,
if sufficiently many Taylor coefficients were used) but still admits a
nontrivial solution, then the detected ODE is very likely to be correct.
Another strategy to gain confidence in the result, is to test it with further
Taylor coefficients, that were not used in the computation. However, this
method can never produce a rigorous proof of the result and there always
remains a (very small) probability that the guess is wrong. For this reason,
any result obtained in this fashion (e.g., the ODEs presented
in~\cite{Guttmann09,Broadhurst09}) is termed a \emph{conjecture}.

\subsection{Starting from the Integral Representation}\label{sec.intrep}
In this section we briefly recapitulate some previous work done by Broadhurst
and Guttmann, who used the integral representation~\eqref{eq.lgf} of the LGF
as a starting point.

Guttmann computed a differential equation for the LGF of the four-dimensional
fcc lattice~\cite{Guttmann09} (see also Theorem~\ref{thm.fcc4}): for this
purpose, the four-fold integral was rewritten as a double integral whose
integrand was expanded as a power series.  Term-by-term integration yielded a
truncated Taylor expansion of the LGF which allowed him to apply the method of
guessing.

Recently, Broadhurst had obtained an ODE for the LGF of the five-dimensional
fcc lattice~\cite{Broadhurst09} (see also Theorem~\ref{thm.fcc5}), a result
that required several days of PARI calculations. Broadhurst's strategy
consisted in expanding the integrand in~\eqref{eq.lgf} as a geometric series
$\sum_{n=0}^\infty \lambda(\k)^nz^n$ and in expanding $\lambda(\k)^n$ using
the multinomial theorem (which gives a $(m-1)$-fold sum if $m$ is the number
of summands in $\lambda(\k)$). The inner terms can now be integrated using
Wallis' formula
\[
  \int_0^\pi \cos(x)^{2n}\rmd x = \frac{\pi}{4^n}\binom{2n}{n}.
\]
The structure function of the 5D fcc lattice consists of
$10$~summands.  Thus the computation of the $n$-th Taylor coefficient
of~$P(\0;z)$ requires the evaluation of a $9$-fold sum, or in other
words, has complexity $O(n^9)$.

\subsection{Counting the walks}\label{sec.count}
A different way to crank out as many Taylor coefficients of the LGF as
necessary is to explicitly count all possible excursions with a certain number
of steps. Let $a_n(\x)$ be the number of walks from~$\0$ to~$\x$ with
$n$~steps and let $c$ denote the coordination number of the lattice, then the
lattice Green's function $P(\0;z)=\sum_{n=0}^\infty a_n(\0)(z/c)^n$, as we
have already remarked earlier. The values of the $(d+1)$-dimensional sequence
($d$~again denotes the dimensionality of the lattice) can be computed with the
recurrence~\eqref{eq.trivrec}. To~obtain the first $n$ Taylor coefficients
hence requires one to fill the $(d+1)$-dimensional array
$\big(a_m(\x)\big)_{0\leq m,x_1,\dots,x_d<n}$ with values (by symmetry it
suffices to consider the first octant only, which again by symmetry can be
restricted to the wedge $x_1\geq x_2\geq\dots\geq x_d$). Still, this has
complexity~$O(n^{d+1})$.  Further optimizations consist in cutting off the
regions where the sequence can be predicted to be zero (e.g. when $x_j>n$ for
some~$j$), and to truncate the $x_j$-coordinates at $n/2$ (since we are
interested in excursions, points too far from the origin~$\0$ are not
relevant). Although the complexity is better than before, computing the full
array of values can be quite an effort. Again, in the example of the 5D fcc
lattice, about $115$ Taylor coefficients are necessary to recover the
recurrence for $a_n(\0)$ (which then gives rise to the differential equation
of~$P(\0;z)$), and hence the full array contains about $115\cdot
58^5/5!\approx 6.3\cdot 10^8$ values! Fortunately we can do better.

\subsection{Multi-Step guessing}\label{sec.multi}
How can the recurrence for $a_n(\0)$ be computed without calculating
all the values of the multivariate sequence~$a_n(\x)$ in the
box~$[0,n]^{d+1}$ (or some slightly optimized version of it)?  In the
previous section, we first computed lots of data, then threw away most
of it, and did a single guessing step.  But the guessing can be done
in several steps which we call \emph{multi-step guessing}.  The method
is illustrated on the 5D fcc example. As before, we start with the
recurrence~\eqref{eq.trivrec} to crank out a moderate number of values
for the six-dimensional sequence~$a_n(x_1,\dots,x_5)$, namely in the
box~$[0,15]^6$, which takes about 30 seconds only. From this array, we
pick the values of~$a_n(x_1,x_2,x_3,0,0)$ which constitute a
four-dimensional sequence that we denote with~$b_n(x_1,x_2,x_3)$. The
data is now used to guess recurrences for this new function~$b$. One
of these recurrences is
\[
  \begin{array}{l}
  (n+1)b_n(x_1,x_2+3,x_3+1) - (n+1)b_n(x_1,x_2+1,x_3+3) +{}\\
  (n+1)b_n(x_1+1,x_2,x_3+3) - (n+1)b_n(x_1+1,x_2+3,x_3) +{}\\ 
  (n+1)b_n(x_1+1,x_2+3,x_3+4) - (n+1)b_n(x_1+1,x_2+4,x_3+3) -{}\\
  (n+1)b_n(x_1+3,x_2,x_3+1) + (n+1)b_n(x_1+3,x_2+1,x_3) -{}\\
  (n+1)b_n(x_1+3,x_2+1,x_3+4) + (n+1)b_n(x_1+3,x_2+4,x_3+1) +{}\\
  (n+1)b_n(x_1+4,x_2+1,x_3+3) - (n+1)b_n(x_1+4,x_2+3,x_3+1) +{}\\
  (x_2+2)b_{n+1}(x_1+1,x_2+2,x_3+3) - (x_3+2)b_{n+1}(x_1+1,x_2+3,x_3+2) -{}\\
  (x_1+2)b_{n+1}(x_1+2,x_2+1,x_3+3) + (x_1+2)b_{n+1}(x_1+2,x_2+3,x_3+1) +{}\\
  (x_3+2)b_{n+1}(x_1+3,x_2+1,x_3+2) - (x_2+2)b_{n+1}(x_1+3,x_2+2,x_3+1) = 0
  \end{array}
\]
which has the disadvantage that it does not allow us to compute the
values $b_n(0,0,0)$ since the leading coefficient vanishes;
unfortunately this phenomenon occurs frequently in this context. An
additional recurrence that does not suffer from this handicap is much
larger and therefore not reproduced here. Anyway, guessing these
recurrences can be done in less than a minute.

Now these recurrences can be used to compute more values for the
sequence $b_n(x_1,x_2,x_3)$ (in 30 seconds one can now go up to
$n=30$) which in turn are used to~guess recurrences
for~$b_n(x_1,x_2,0)$.  These latter recurrences allow to compute
$a_n(\0)=b_n(0,0,0)$ for $0\leq n\leq 115$ in about $2.5$
minutes. Voil\`{a}! The whole computation takes less than 5 minutes on
a modest laptop. Of course it is a matter of trial and error to
determine how many coordinates are set to~$0$ in each step (in the
above example, we did~$2$ in the first step, $1$ in the second, and
again~$2$ in the third step).

\section{A Computer Algebra Approach}\label{sec.ca}
Again, we want to emphasize that the results presented in the previous
section are certainly nice, but lack mathematical rigor. To
achieve ultimate confidence in their correctness we have to apply a
different method. One possible such method is called \emph{creative
  telescoping}, a short introduction of which is given in the
following section.  After that we are able to state our results in
the form of theorems.

\subsection{Creative Telescoping}\label{sec.ct}
This method has been popularized by Zeilberger in his seminal
paper~\cite{Zeilberger90}.  Since then it has been applied to
innumerable identities involving hypergeometric summations, multisums,
integrals of special functions, and various other kinds of problems.
The basic idea is very simple and we illustrate it on the example
of a definite integral $F(z)=\int_a^b f(x,z)\rmd x$. The main step in the
algorithm consists in finding a partial differential equation for
$f(x,z)$ that can be written in the form
\begin{equation}\label{eq.ct}
  \big(A(z,\D{z}) + \D{x}B(x,z,\D{x},\D{z})\big)(f(x,z)) = 0
\end{equation}
where the \emph{telescoper} $A\in\C(z)\langle\D{z}\rangle$ and the \emph{delta
  part} $B\in\C(x,z)\langle\D{x},\D{z}\rangle$ are differential operators,
with the previously introduced notation of $\D{x}$ being the partial
derivative w.r.t.~$x$. By $\C(z)\langle\D{z}\rangle$ we denote the
non-commutative Ore algebra that can be viewed as a polynomial ring in the
``variable''~$\D{z}$ with rational function coefficients in~$\C(z)$.  The
result of the algorithm is a (possibly) inhomogeneous linear ODE for the
integral~$F(z)$ that is obtained by integrating Equation~\eqref{eq.ct}:
\[
  A\big(F(z)\big) + \Big[B\big(f(x,z)\big)\Big]_{x=a}^{x=b}=0.
\]
In applications one frequently encounters the situation that the second part
vanishes, yielding a homogeneous ODE. This is because many integrals that
occur in practice, have \emph{natural boundaries}. With our study of LGFs, we
are in a similar situation: in Section~\ref{sec.diffeq} it was shown that the
telescoper of~\eqref{eq.ctmult} automatically annihilates the LGF.
Nevertheless, in some of the present cases we did do the additional (but
superfluous) check that the differential equation is indeed homogeneous by
plugging in the boundaries of the integral, and got confirmation.

If this method is applied to a one-dimensional integral with 
hyperexponential integrand (i.e., its logarithmic derivative
is a rational function), then it is called the Almkvist-Zeilberger
algorithm. Its summation counterpart is the celebrated Zeilberger
algorithm for hypergeometric summation. For our purposes we have
to generalize the input class for the integrand to the so-called
$\d$-finite holonomic functions: a function~$f(x_1,\dots,x_d)$ 
is called $\d$-finite if for each~$x_i$ there exists a linear ODE 
for~$f$ with respect to~$x_i$. If in addition $f$ is holonomic
(the definition of this notion is somewhat technical and is omitted
here) then the existence of creative telescoping operators like
\eqref{eq.ctmult} or \eqref{eq.ct} is guaranteed.

The first algorithm to compute~\eqref{eq.ct} for general $\d$-finite functions
(our examples fall into this class, too) was proposed in~\cite{Chyzak00}. It
can deal with single integrations only and thus has to be applied iteratively
for multiple integrals. Its main drawback is its complexity that makes it
impossible to apply it to the problems discussed in this
paper. In~\cite{Koutschan10c} we have developed a different approach to
compute~\eqref{eq.ct} which is much better suited for large examples involving
multidimensional integrals. In addition, it can directly deal with multiple
integrals by computing operators of the form~\eqref{eq.ctmult}, but in the
present context it turned out to be more efficient to do the integrations step
by step. Both algorithms are implemented in our Mathematica package
\texttt{HolonomicFunctions}~\cite{Koutschan10b}, and a detailed introduction
into the topic is given in~\cite{Koutschan09}. The following example
demonstrates how this method is applied to the previously discussed
two-dimensional lattice.
\begin{example}
Looking at the integrand of Equation~\eqref{eq.lgf2d} one realizes
that it is not $\d$-finite since no linear ODE with respect to~$k_1$
can be found (and analogously for~$k_2$). But by means of the simple
substitutions $\cos k_1\to x_1$ and $\cos k_2\to x_2$
we can overcome this trouble: the integral now reads
\begin{equation}\label{eq.int2d}
  P(z) = \frac{1}{\pi^2} \int_{-1}^1\int_{-1}^1\frac{\rmd x_1\rmd x_2}{(1-z x_1 x_2)\sqrt{1-x_1^2}\sqrt{1-x_2^2}}.
\end{equation}
Let $f(x_1,x_2,z)$ denote the above integrand; it is easily verified
that it is a $\d$-finite function. The three ODEs w.r.t. $x_1$, $x_2$, 
and $z$ are given by the operators
\begin{eqnarray*}
  G_1 & = & (x_1x_2z-1)D_{\!z}+x_1x_2,\\
  G_2 & = & (x_2^2-1)(x_1x_2z-1)D_{\!x_2}+(2x_1x_2^2z-x_1z-x_2),\\
  G_3 & = & (x_1^2-1)(x_1x_2z-1)D_{\!x_1}+(2x_1^2x_2z-x_1-x_2z),
\end{eqnarray*}
so that $G_i\big(f(x_1,x_2,z)\big)=0$ for $i=1,2,3$. In this example,
it is an easy exercise to check that the creative telescoping
operator
\begin{equation}\label{eq.ct2d}
  z(z^2-1)D_{\!z}^2+(3z^2-1)D_{\!z}+z\>+\>\D{x_1}\frac{x_2-x_1^2x_2}{x_1x_2z-1}\>+\>\D{x_2}\frac{x_2z-x_2^3z}{x_1x_2z-1}
\end{equation}
annihilates the integrand~$f$. Indeed, it can be written as a
linear combination 
\[
  \left(\frac{z(z^2-1)}{x_1x_2z-1}D_{\!z}+\frac{x_1x_2z(z^2+1)-3z^2+1}{(x_1x_2z-1)^2}\right)G_1
  -\frac{x_2}{(x_1x_2z-1)^2}(zG_2+G_3)
\]
of the previously computed operators. It follows that
the double integral~\eqref{eq.int2d} satisfies the ODE
\[
  z(z^2-1)P''(z)+(3z^2-1)P'(z)+zP(z) = 0
\]
whose solution is the elliptic integral~$\mathbf{K}(z^2)$.

Alternatively, the two integrations can be performed in two steps (the
strategy that will be applied to the higher-dimensional fcc lattices).
In the first step (integration w.r.t.~$x_1$) the following two
creative telescoping operators are found:
\begin{eqnarray*}
  &&(x_2^2z^2-1)D_{\!z}+x_2^2z\>+\>\D{x_1}(x_1^2-1)x_2\\
  &&(x_2^2-1)(x_2^2z^2-1)D_{\!x_2}+x_2(2x_2^2z^2-z^2-1)\>+\>\D{x_1}(x_1^2-1)(x_2^2-1)z.
\end{eqnarray*}
They certify that the integral $\int_{-1}^1 f(x_1,x_2,z)\rmd x_1$ is annihilated
by $(x_2^2z^2-1)D_{\!z}+x_2^2z$ and $(x_2^2-1)(x_2^2z^2-1)D_{\!x_2}+x_2(2x_2^2z^2-z^2-1)$.
Next the operator
\[
  z(z^2-1)D_{\!z}^2+(3z^2-1)D_{\!z}+z\,-\,\D{x_2}\frac{x_2z(x_2^2-1)}{(x_2^2z^2-1)}
\]
which is a linear combination of the previous ones, 
again reveals the same ODE for the double integral.
\exampleEnd
\end{example}

\subsection{Results}\label{sec.results}
Using the above methodology and software, we have computed
differential equations for the LGFs of the fcc lattices in four, five,
and six dimensions, and rigorously proved their correctness.
Additionally, this allows the computation of the return probabilities in
the respective lattices up to very high precision.

\begin{theorem}\label{thm.fcc4}
The lattice Green's function of the four-dimensional face-centered cubic lattice
\[
  P(z) = \frac{1}{\pi^4} \int_0^\pi\int_0^\pi\int_0^\pi\int_0^\pi\frac{\rmd k_1\rmd k_2\rmd k_3\rmd k_4}
  {1-\frac{z}{6}\big(\cos k_1\cos k_2+\cos k_1\cos k_3+\dots+\cos k_3\cos k_4\big)}
\]
satisfies the differential equation
\[
  \begin{array}{l}
  (z-1) (z+2) (z+3) (z+6) (z+8) (3 z+4)^2 z^3 P^{(4)}(z) + {}\\
  2 (3 z+4) (21 z^6+356 z^5+2079 z^4+4920 z^3+3676 z^2-2304 z-3456) z^2 P^{(3)}(z) + {}\\
  6 (81z^7+1286 z^6+7432 z^5+19898 z^4+25286 z^3+11080 z^2\!-5248 z-5376) z P''(z) + {}\!\!\!\!\!\!\\
  12 (45 z^7+604 z^6+2939 z^5+6734 z^4+7633 z^3+3716 z^2+224z-384) P'(z) + {}\\
  12 (9 z^5+98 z^4+382 z^3+702 z^2+632 z+256) z P(z) = 0.
  \end{array}
\]
\end{theorem}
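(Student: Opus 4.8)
The plan is to follow exactly the creative-telescoping strategy outlined in Section~\ref{sec.ct}, performing the four integrations one variable at a time. First I would apply the substitutions $\cos k_j \to x_j$ for $j=1,\dots,4$, which turns the four-fold integral over $[0,\pi]^4$ into
\[
  P(z) = \frac{1}{\pi^4}\int_{-1}^1\!\!\cdots\!\int_{-1}^1
    \frac{\rmd x_1\cdots\rmd x_4}
         {\bigl(1-\tfrac{z}{6}\textstyle\sum_{m<n} x_m x_n\bigr)\prod_{j=1}^4\sqrt{1-x_j^2}}.
\]
The integrand $f(x_1,x_2,x_3,x_4,z)$ is $\d$-finite and holonomic: its logarithmic derivative with respect to each variable is a rational function, so a first-order annihilating operator in each $\D{x_j}$ and in $\D{z}$ is written down directly. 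These five operators generate the annihilating ideal that serves as input to the algorithm of~\cite{Koutschan10c} as implemented in \texttt{HolonomicFunctions}~\cite{Koutschan10b}.

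Next I would integrate successively. In step~$j$ ($j=1,2,3$) I feed the annihilating system for the partially integrated function
$g_j(x_{j+1},\dots,x_4,z)=\int_{-1}^1\!\cdots\!\int_{-1}^1 f\,\rmd x_1\cdots\rmd x_j$
to the creative-telescoping routine, obtaining a telescoper of the form \eqref{eq.ctmult} — that is, an operator $A_j$ involving only $x_{j+1},\dots,x_4,z$ and their derivatives, plus a sum $\sum_{i\le j}\D{x_i}B_i$ — that annihilates the integrand and whose delta part integrates to zero over $[-1,1]$ (this holds because $\sqrt{1-x_i^2}$ vanishes at the endpoints, giving natural boundaries; as in the paper one may additionally verify this by substitution). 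The output of each step is a new $\d$-finite system in one fewer variable. After the fourth integration ($j=4$, integrating out $x_4$) the telescoper $A_4\in\C(z)\langle\D{z}\rangle$ is precisely the claimed fourth-order operator, and since its delta part integrates to zero the ODE it encodes is homogeneous and annihilates $P(z)$. Finally I clear denominators and normalize to recover the stated polynomial coefficients, and I would independently confirm the result by checking that it annihilates the first several dozen Taylor coefficients of $P(z)$ (computed from the recurrence~\eqref{eq.trivrec}).

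The main obstacle is computational rather than conceptual: the intermediate $\d$-finite systems — especially after the first and second integrations, where the structure function has $\binom{4}{2}=6$ cross-terms and the certificates $B_i$ carry high-degree polynomial coefficients in the remaining $x_i$ and $z$ — produce very large linear systems over $\C(z)$, and controlling their size (through a careful choice of ansatz support and the order in which variables are eliminated) is what makes the calculation feasible. A secondary subtlety is that at some integration step the telescoper obtained may have an order higher than strictly necessary, or its leading coefficient in $\D{z}$ may pick up spurious factors; one then has to right-divide out the extraneous part to arrive at the minimal-order operator of Theorem~\ref{thm.fcc4}. Because each telescoper comes with an explicit delta-part certificate, the correctness of the final ODE is mathematically rigorous once the (routine but large) verification that $A_j + \sum_i \D{x_i}B_i$ annihilates the integrand is carried out in the Ore algebra — a check that the computer performs exactly.
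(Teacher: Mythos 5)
Your proposal follows essentially the same route as the paper: the substitution $\cos k_j\to x_j$, a first-order (hyperexponential) annihilating system for the resulting algebraic integrand, and iterated creative telescoping (via \texttt{HolonomicFunctions}) eliminating one $x_j$ at a time until only the telescoper in $z$ survives as the stated fourth-order ODE. The one small imprecision is your justification of the vanishing boundary terms: the factors $\sqrt{1-x_i^2}$ sit in the \emph{denominator} of the integrand, so what actually vanishes at $x_i=\pm1$ is the certificate $B_i$ applied to $f$, whose numerator carries a compensating factor $(1-x_i^2)$ --- a check the paper calls superfluous because of the constant-term argument of Section~\ref{sec.diffeq}.
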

\begin{proof}
Here we give only an outline of the proof. The calculations in extenso 
are provided as a Mathematica notebook in the electronic supplementary 
material~\cite{Koutschan11a} (to be downloaded from
http:/$\!$/www.koutschan.de/data/fcc/).

The substitutions $\cos k_j\to x_j$ transform the integrand of the four-fold
integral to
\begin{equation}\label{eq.intg}
  f(x_1,\dots,x_4,z) = 
  \frac{1}{\big(1-\frac{z}{6}(x_1x_2+x_1x_3+\dots+x_3x_4)\big)\cdot
           \prod_{j=1}^4\sqrt{1-x_j^2}}.
\end{equation}
This expression is $\d$-finite and thus a Gr\"obner basis of the
zero-dimensional annihilating left ideal can be computed
(\texttt{ann0} in the notebook). Next, operators
$A_j(x_2,x_3,x_4,z,\D{x_2},\D{x_3},\D{x_4},\D{z})$ and
$B_j(x_1,x_2,x_3,x_4,z,\D{x_1},\D{x_2},\D{x_3},\D{x_4},\D{z})$
for $1\leq j\leq 4$ are computed, such that $A_j+\D{x_1}B_j$ is an
element in the left ideal generated by~\texttt{ann0}. This fact can be
easily tested by reducing it with the Gr\"obner basis: the remainder
being~$0$ answers the membership question in an affirmative way. In
the notebook, the $A_j$'s are collected in the variable \texttt{ann0},
and the $B_j$'s in the variable \texttt{delta1}. We conclude that
$A_1$, $A_2$, $A_3$, and $A_4$ generate an annihilating left ideal for
the integral $\int_0^\pi f(x_1,x_2,x_3,x_4,z)\rmd x_1$. In a similar
fashion, the integrations with respect to $x_2$, $x_3$, and $x_4$ are
performed, yielding a single ODE in $z$ that annihilates~$P(z)$.
\end{proof}
Note that this theorem confirms the conjectured result given
in~\cite{Guttmann09}.  Guttmann also observed that the differential equation
given in Theorem~\ref{thm.fcc4} has \emph{maximal unipotent monodromy} (MUM),
i.e., its indicial equation is of the form $\lambda^n$ and hence has only $0$
as a root, and additionally satisfies the \emph{Calabi-Yau condition}. Many
LGFs of other lattices fall into this class, too, and therefore this fact may
not seem too surprising.

In order to receive the return probability in the four-dimensional fcc
lattice, holonomic closure properties are applied to compute a differential
equation for
\[
  \frac{P(z)}{1-z}=\sum_{n=0}^\infty \bigg(\sum_{k=0}^n p_k(\0)\bigg)z^n,
\]
which in turn gives a recurrence for $f(n)=\sum_{k=0}^n p_k(\0)$:
\[
  \begin{array}{l}
  (n+2) (n+3)^2 (n+4) (35 n^2+420 n+1252) f(n) + {}\\
  (n+3) (n+4) (595 n^4+11375 n^3+79874 n^2+244384n+276024) f(n+1) + {}\\
  3 (n+4) (1015 n^5+24780 n^4+240253 n^3+1156976 n^2 + {}\\
  \qquad 2769392 n+2638272) f(n+2) + {}\\
  (3325 n^6+107100 n^5+1427695n^4+10080600 n^3+39767416 n^2 + {}\\
  \qquad 83134488 n+71984160) f(n+3) - {}\\
  4 (2065 n^6+62580 n^5+788848 n^4+5295615 n^3+19973086 n^2 + {}\\
  \qquad 40139838 n+33590844) f(n+4) - {}\\
  12 (735 n^6+25200 n^5+359282 n^4+2725632 n^3+11601091 n^2 + {}\\
  \qquad 26259960 n+24690708) f(n+5) + {}\\
  288 (35 n^2+350 n+867) (n+6)^4 f(n+6) = 0.
  \end{array}
\]
The initial values
\[
  f(0)=1,\quad f(1)=1,\quad f(2)=\frac{25}{24},\quad f(3)=\frac{19}{18},\quad f(4)=\frac{1637}{1536},\quad f(5)=\frac{549}{512}
\]
are easily (of course, not by hand!) computed by counting the number of excursions
of length up to~$5$. For the return probability 
\[
  R = 1 - \left(\lim_{n\to\infty} f(n)\right)^{-1}
\]
we need to evaluate the limit of the sequence~$f(n)$. This can be done very
accurately when knowing the asymptotics of the sequence. We apply the method
described in~\cite{WimpZeilberger85}, which has been implemented in
Mathematica~\cite{Kauers11a}, and obtain the following basis of asymptotic
solutions:
\begin{eqnarray*}
  s_1(n) & = & \frac{1}{n^2}\left(-\frac{1}{2}\right)^n 
    \left(1-\frac{5}{6n}+\frac{67}{24n^2}+\frac{1459}{144n^3}+O\Big(\frac{1}{n^4}\Big)\right),\\
  s_2(n) & = & \frac{1}{n^2}\left(-\frac{1}{3}\right)^n 
    \left(1-\frac{5}{2n}+\frac{51}{8n^2}-\frac{143}{8n^3}+O\Big(\frac{1}{n^4}\Big)\right),\\
  s_3(n) & = & \frac{1}{n^2}\left(-\frac{1}{6}\right)^n 
    \left(1-\frac{45}{14n}+\frac{4633}{392n^2}-\frac{112407}{5488n^3}+O\Big(\frac{1}{n^4}\Big)\right),\\
  s_4(n) & = & \frac{1}{n^2}\left(-\frac{1}{8}\right)^n 
    \left(1-\frac{52}{9n}+\frac{812}{27n^2}-\frac{45820}{243n^3}+O\Big(\frac{1}{n^4}\Big)\right),\\
\end{eqnarray*}
\begin{eqnarray*}
  s_5(n) & = & \frac{1}{n}\left(1-\frac{1}{n}+\frac{7}{9n^2}-\frac{7}{18n^3}+O\Big(\frac{1}{n^4}\Big)\right),
    \qquad\qquad\qquad\quad\\ 
  s_6(n) & = & 1.
\end{eqnarray*}
Obviously, the first four solutions do not significantly contribute
as they tend to~$0$ very rapidly. Thus by taking into account $s_5(n)$
and $s_6(n)$ only, and computing the asymptotic expansion to a higher order
(e.g., $30$), allows us to obtain (at least) 100 correct digits of the limit.

\begin{corollary}\label{cor.d4}
The LGF of the four-dimensional fcc lattice, evaluated at $z=1$ is
\[
  P(1)\approx 1.10584379792120476018299547088585107443954623663875285836499, 
\]
and therefore the return probability is
\[
  R\approx 0.09571315417256289673531676490121018570070881963801735768774. 
\]
\end{corollary}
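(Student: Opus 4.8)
The plan is to reduce the evaluation of $P(1)$ to the computation of $\lim_{n\to\infty}f(n)$, where $f(n)=\sum_{k=0}^n p_k(\0)$, and then to extract that limit from the asymptotic data listed above. First I would observe that the four-dimensional fcc walk is transient, so the series $\sum_{n\geq0}p_n(\0)$ converges; hence by Abel's theorem and the continuity of the integral representation~\eqref{eq.lgf} at $z=1$ (where $1-\lambda(\k)$ vanishes only quadratically at $\k=\0$, so the four-fold integral still converges) we have $P(1)=\lim_{n\to\infty}f(n)$, and the return probability is $R=1-1/P(1)$ by the formula of Section~\ref{sec.lgf}. Thus the whole task is to pin down $\lim_n f(n)$ to about a hundred digits.

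Second, I would use the order-six recurrence for $f(n)$ displayed above --- which follows rigorously from Theorem~\ref{thm.fcc4} by the holonomic closure property applied to $P(z)/(1-z)$ --- together with the six exact initial values $f(0),\dots,f(5)$ (each a rational with denominator a power of the coordination number $24$, obtained by counting short excursions) to generate $f(n)$ in exact arithmetic for indices $n$ up to a few thousand. In parallel, the six-dimensional solution space of the recurrence is spanned asymptotically by $s_1,\dots,s_6$: the solutions $s_1,\dots,s_4$ decay like $(-\tfrac12)^n,\dots,(-\tfrac18)^n$ times $n^{-2}$, while $s_5(n)\sim 1/n$ and $s_6(n)\equiv1$. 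Consequently $f(n)=c_6+c_5\,s_5(n)+O(2^{-n})$ for connection constants $c_5,c_6\in\R$, and the limit we want is exactly the constant $c_6$.

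The main obstacle is the \emph{connection problem}: determining $c_6$ (and $c_5$) numerically with certified precision. The practical route is to expand $s_5$ and $s_6$ to high order --- order $30$, as indicated before the corollary --- evaluate them at the large indices $n$ where $f(n)$ has already been computed, and solve the resulting heavily overdetermined linear system for $c_5,c_6$. This system is extremely well conditioned because every other solution of the recurrence is exponentially suppressed: the contribution of $s_1,\dots,s_4$ to $f(n)$ is of size $2^{-n}$, and the error from truncating the $1/n$-expansion of $s_5$ at order $30$ is $O(n^{-30})$, so taking $n$ of order a few hundred already stabilises more than $100$ digits of $c_6$. Increasing both $n$ and the expansion order and checking that the digits do not move gives the confirmation. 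A fully independent cross-check is to sum the series $\sum_n p_n(\0)$ directly --- the $p_n(\0)$ being read off from the same recurrence, or from~\eqref{eq.trivrec} --- with a convergence-acceleration scheme, and to verify agreement with $c_6$.

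Finally, substituting the computed value of $P(1)=c_6$ into $R=1-1/P(1)$ yields the stated digits of the return probability; the only arithmetic left is one high-precision reciprocal and a subtraction. All of the steps above --- Gröbner-basis reduction for the ODE, the closure-property computation of the recurrence, the exact iteration, and the asymptotic expansion via the method of~\cite{WimpZeilberger85} as implemented in~\cite{Kauers11a} --- are carried out in the accompanying Mathematica notebook~\cite{Koutschan11a}, so the role of the proof is to certify that this pipeline indeed produces the claimed numbers.
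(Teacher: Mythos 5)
Your proposal follows essentially the same route as the paper: derive the order-six recurrence for $f(n)=\sum_{k=0}^n p_k(\0)$ from the ODE for $P(z)/(1-z)$ via closure properties, feed in the exact initial values, use the asymptotic basis $s_1,\dots,s_6$ to see that only $s_5$ and $s_6$ matter, and match the constant term to obtain $P(1)=\lim_n f(n)$ and hence $R=1-1/P(1)$. The only additions are your explicit Abel-theorem justification of $P(1)=\lim_n f(n)$ and the proposed cross-check, which are welcome but do not change the argument.
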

\noindent
Note: Corollaries~\ref{cor.d4} and~\ref{cor.d5} have been confirmed
independently by employing the certified numerics implemented in the Maple
package \texttt{NumGfun}~\cite{Mezzarobba10}.

\begin{theorem}\label{thm.fcc5}
The lattice Green's function of the five-dimensional fcc lattice
\[
  P(z) = \frac{1}{\pi^5} \int_0^\pi\cdots\int_0^\pi\frac{\rmd k_1\rmd k_2\rmd k_3\rmd k_4\rmd k_5}
  {1-\frac{z}{10}\big(\cos k_1\cos k_2+\cos k_1\cos k_3+\dots+\cos k_4\cos k_5\big)}
\]
satisfies the differential equation
\[
  \begin{array}{l}
  16 (z-5) (z-1) (z+5)^2 (z+10) (z+15) (3 z+5) (15678 z^6+144776 z^5+{}\\
  \quad 449735 z^4+ 933650 z^3-1053375 z^2+3465000 z-675000) z^4 P^{(6)}(z)+{}\\
  8 (z+5) (3057210 z^{12}+97471734 z^{11}+1048560285 z^{10}+3939663705 z^9-{}\\
  \quad 4878146975 z^8-87265479875 z^7-304623830625 z^6-266627903125 z^5+{}\\
  \quad 254876515625 z^4-1289447109375 z^3-503550000000 z^2+{}\\
  \quad 1774828125000 z- 354375000000) z^3 P^{(5)}(z)+{}\\
  10 (27279720 z^{13}+923795772 z^{12}+11725276842 z^{11}+68439921540 z^{10}+{}\\
  \quad 148313757125 z^9-382134335775 z^8-3351125770500 z^7-7801785421250 z^6-{}\\
  \quad 3779011321875 z^5-7716298734375 z^4-39702348750000 z^3+{}\\
  \quad 3393646875000 z^2+ 23905125000000 z-5568750000000) z^2 P^{(4)}(z)+{}\\
  5 (255864960 z^{13}+7892060544 z^{12}+92744995638 z^{11}+524857986060 z^{10}+{}\\
  \quad 1350059072325 z^9\!-465440555100 z^8\!-13545524756500 z^7\!-26918293320000 z^6-{}\\
  \quad 3649915059375 z^5-77498059625000 z^4-190176960000000 z^3+{}\\
  \quad 40530375000000 z^2+45343125000000 z-13162500000000) z P^{(3)}(z)+{}\\
  5 (496679040 z^{13}+13819981248 z^{12}+149186684934 z^{11}+810956145330 z^{10}+{}\\
  \quad 2287368823475 z^9+1646226060075 z^8-8282515456375 z^7-6199228765625 z^6+{}\\
  \quad 13367806743750 z^5-110925736437500 z^4-133825053750000 z^3+{}\\
  \quad 44457862500000 z^2+ 5055750000000 z-3240000000000) P''(z)+{}\\
  10 (167064768 z^{12}+4143853440 z^{11}+40678130502 z^{10}+209673119160 z^9+{}\\
  \quad 607021304825 z^8+689643286650 z^7-135661728250 z^6+3711617481250 z^5+{}\\
  \quad 2664478321875 z^4-21210430812500 z^3-7268326875000 z^2+{}\\
  \quad 4816462500000 z- 189000000000) P'(z)+{}\\
  30 (7525440 z^{11}+163913184 z^{10}+1443544710 z^9+6925739310 z^8+{}\\
  \quad 19123388575 z^7+21336230625 z^6+36477006875 z^5+187923165625 z^4-{}\\
  \quad 55567000000 z^3-346865625000 z^2+84037500000 z+27000000000) P(z) = 0.
  \vspace{-1mm} 
  \end{array}
\]
\end{theorem}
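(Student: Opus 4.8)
As with Theorem~\ref{thm.fcc4}, the plan is to use creative telescoping, integrating out the five space variables one at a time; the method is the same, and the real difficulty is making it run at this size. Full details would be provided in the supplementary notebook~\cite{Koutschan11a}.

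First we apply the substitutions $\cos k_j\to x_j$, which turn the five-fold integral into
\[
  P(z) = \frac{1}{\pi^5}\int_{-1}^1\!\!\cdots\!\!\int_{-1}^1
    \frac{\rmd x_1\cdots\rmd x_5}{\big(1-\frac{z}{10}\sum_{1\leq m<n\leq5}x_mx_n\big)\prod_{j=1}^5\sqrt{1-x_j^2}}.
\]
Write $f(x_1,\dots,x_5,z)$ for the new integrand. As before it is $\d$-finite --- in fact hyperexponential, since its logarithmic derivative with respect to each of the six variables is a rational function --- so one can compute a Gr\"obner basis of its zero-dimensional annihilating left ideal in the Ore algebra $\C(x_1,\dots,x_5,z)\langle\D{x_1},\dots,\D{x_5},\D{z}\rangle$.

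Next we would run the creative-telescoping algorithm of~\cite{Koutschan10c} five times in succession. At the $j$-th step one seeks a telescoper $A_j$, free of $x_j,\dots,x_5$, together with a delta part $B_j$, such that $A_j+\D{x_j}B_j$ lies in the left ideal annihilating the $(j-1)$-fold integral; membership is certified by reducing $A_j+\D{x_j}B_j$ to $0$ modulo the corresponding Gr\"obner basis. Because $f$ carries the factor $\sqrt{1-x_j^2}$ it vanishes at $x_j=\pm1$, so every boundary term $[B_j]_{x_j=-1}^{x_j=1}$ is $0$: each step produces a homogeneous system, and the operators $A_j$ annihilate $\int_{-1}^1\!\cdots\!\int_{-1}^1 f\,\rmd x_1\cdots\rmd x_j$. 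After the fifth integration a single ODE in $z$ for $P(z)$ remains; taking its monic generator of minimal order and clearing denominators yields the stated equation of order six. A final (logically unnecessary but reassuring) step would check the result against the first $\approx 115$ Taylor coefficients of $P(z)$, obtained by counting excursions, and confirm agreement with the conjecture of Broadhurst~\cite{Broadhurst09}.

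The main obstacle will be sheer size. The annihilating ideal of $f$ in five space variables is far larger than in the four-dimensional case, and the delta parts $B_j$ produced by the later integrations are enormous rational functions with high-degree numerators and denominators in all surviving variables; the coupled-system approach of~\cite{Chyzak00} is out of the question here. Keeping the computation within reach hinges on the ansatz-based telescoper algorithm of~\cite{Koutschan10c}, which finds $A_j$ without first building $B_j$; a judicious choice of the order in which the $x_j$ are eliminated and of the monomial order for the Gr\"obner bases; and modular and evaluation-homomorphism techniques to contain intermediate expression swell. Once a telescoper has been found, certifying it --- reducing $A_j+\D{x_j}B_j$ to zero --- is a routine but memory-intensive task, carried out in full in the notebook.
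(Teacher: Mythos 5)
Your proposal follows the paper's proof exactly: substitute $\cos k_j\to x_j$, observe that the new integrand is $\d$-finite, compute a Gr\"obner basis of its annihilating left ideal, and then eliminate $x_1,\dots,x_5$ one at a time by creative telescoping, certifying each telescoper by reducing $A_j+\D{x_j}B_j$ to zero modulo the Gr\"obner basis of the current ideal. That is precisely the argument sketched for Theorem~\ref{thm.fcc4} and invoked verbatim for this theorem, so on the level of method there is nothing to criticize.

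One justification in your write-up is wrong, though, and it concerns the only step where something could actually go astray: the homogeneity of the resulting ODE. You claim that ``because $f$ carries the factor $\sqrt{1-x_j^2}$ it vanishes at $x_j=\pm1$.'' It does not: the factors $\sqrt{1-x_j^2}$ sit in the \emph{denominator} of $f$, so the integrand blows up at the endpoints rather than vanishing there. The boundary terms $\bigl[B_j(f)\bigr]_{x_j=-1}^{x_j=1}$ do vanish, but for a different reason: the certificates one obtains carry a factor $(1-x_j^2)$ in the numerator (as in the two-dimensional example, where the delta parts are $\D{x_1}\frac{x_2-x_1^2x_2}{x_1x_2z-1}$ etc.), so that $B_j(f)$ behaves like $\sqrt{1-x_j^2}$ near the endpoints. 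The paper sidesteps the issue entirely via the constant-term formulation of Section~\ref{sec.diffeq}: writing $P(\0;z)$ as the residue $\langle\y^{\0}\rangle F(\y;z)$, the coefficient of $y_j^{-1}$ in $\D{y_j}(\cdot)$ is identically zero, so the telescoper annihilates the LGF with no boundary analysis at all, and the explicit boundary check is expressly labelled superfluous. You should either adopt that argument or correct your endpoint claim; as written, the sentence asserts something false about $f$.
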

\begin{proof}
The proof is very analogous to that of Theorem~\ref{thm.fcc4} and is given
in detail in the supplementary material~\cite{Koutschan11a}.
\end{proof}
Again, we are happy to report that our proof confirms the conjectured ODE
of~\cite{Broadhurst09}.  Remarkably enough, the indicial equation of the
differential equation presented in Theorem~\ref{thm.fcc5} is
$\lambda^5(\lambda-1)$ and hence the ODE lacks MUM. For the same reason it is
not a Calabi-Yau differential equation.

\begin{corollary}\label{cor.d5}
The LGF of the five-dimensional fcc lattice, evaluated at $z=1$ is
\[
  P(1)\approx 1.04885235135491485162956376369999275945402550465206640313845, 
\]
and therefore the return probability is
\[
  R\approx 0.04657695746384802419337442059480329107640239774632112930532. 
\]
\end{corollary}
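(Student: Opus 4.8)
The plan is to follow exactly the route used above for the four-dimensional lattice. Starting from the certified order-six operator for $P(z)$ provided by Theorem~\ref{thm.fcc5}, I would apply holonomic closure properties — the product of the $\d$-finite function $P(z)$ with the $\d$-finite function $1/(1-z)$ is again $\d$-finite, a computation carried out with the package \texttt{HolonomicFunctions}~\cite{Koutschan10b} — to obtain a linear ODE annihilating
\[
  \frac{P(z)}{1-z} = \sum_{n=0}^\infty\bigg(\sum_{k=0}^n p_k(\0)\bigg)z^n,
\]
and then translate this ODE into a linear recurrence with polynomial coefficients for the partial-sum sequence $f(n)=\sum_{k=0}^n p_k(\0)$. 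The finitely many initial values needed to run the recurrence are produced — of course not by hand — by counting excursions of small length on the five-dimensional fcc lattice via the elementary recurrence~\eqref{eq.trivrec}; since the coordination number is $4\binom{5}{2}=40$, each $p_k(\0)$ is a rational number whose denominator divides $40^k$, and only a handful of them are required.

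Next I would determine the asymptotic behaviour of $f(n)$ as $n\to\infty$ by the method of~\cite{WimpZeilberger85}, as implemented in~\cite{Kauers11a}. The recurrence possesses a basis of formal asymptotic solutions that splits into two groups: a family of exponentially decaying solutions, behaving like $\rho^{-n}$ times a subexponential factor for various $|\rho|>1$ (the dominant such $\rho$ being $-5/3$, from the factor $3z+5$ in the leading coefficient of Theorem~\ref{thm.fcc5}), all of which are numerically negligible; and a small group of slowly varying solutions, in which the constant solution $s(n)=1$ dominates and the remaining ones decay like a negative power of~$n$ — reflecting the singular behaviour of $P$ at its dominant singularity $z=1$ together with the simple pole contributed by the factor $1/(1-z)$. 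Since $R=1-\big(\lim_{n\to\infty}f(n)\big)^{-1}$, everything reduces to extracting the coefficient of $s(n)=1$ in the expansion of $f(n)$ in this basis. I would solve this connection problem numerically: compute the asymptotic expansions to high order (say order~$30$ or more) and match them against the values of $f(n)$ generated from the recurrence for a moderately large~$n$; this pins down $\lim_n f(n)=P(1)$, and hence $R$, to the stated precision.

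The one real difficulty is numerical control rather than any conceptual point: one must take $n$ large enough, and keep enough terms of the (asymptotic, hence divergent) series, that the combined error — the exponentially small tail, the truncation error of the asymptotic expansion, and the conditioning of the linear system for the connection constants — is certainly below $10^{-100}$. Instead of estimating this by hand, I would corroborate the value by an independent certified computation: the package \texttt{NumGfun}~\cite{Mezzarobba10} evaluates the $\d$-finite function $P(z)$ at $z=1$ with rigorous error bounds, by analytically continuing the solution of the ODE of Theorem~\ref{thm.fcc5} along a path that avoids its singularities, and its output matches the value found above. The complete details of all these computations appear in the supplementary Mathematica notebook~\cite{Koutschan11a}.
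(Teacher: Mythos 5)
Your proposal follows essentially the same route as the paper: for the five-dimensional lattice the paper applies exactly the pipeline it spells out for the four-dimensional case (holonomic closure to get an ODE for $P(z)/(1-z)$, the induced recurrence for the partial sums $f(n)$ with initial values from excursion counting, Wimp--Zeilberger asymptotics to isolate the constant solution and extract $\lim_n f(n)=P(1)$), together with the independent certified confirmation via \texttt{NumGfun} noted after Corollary~\ref{cor.d4}. The only inessential slip is your parenthetical attribution of the slowest-decaying exponential mode to the factor $3z+5$: such linear factors of the leading coefficient can be apparent singularities --- indeed the analogous factor $(3z+4)^2$ in the four-dimensional case contributes no term to the asymptotic basis $s_1,\dots,s_6$ listed in the paper --- so this should be checked rather than read off the leading coefficient, though it does not affect the argument.
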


\begin{theorem}\label{thm.fcc6}
The lattice Green's function of the six-dimensional face-centered cubic lattice
\[
  P(z) = \frac{1}{\pi^6} \int_0^\pi\cdots\int_0^\pi
  \frac{\rmd k_1\rmd k_2\rmd k_3\rmd k_4\rmd k_5\rmd k_6}
  {1-\frac{z}{15}\big(\cos k_1\cos k_2+\cos k_1\cos k_3+\dots+\cos k_5\cos k_6\big)}
\]
satisfies a differential equation of order~$8$ and with polynomials
coefficients of degree~$43$. Its leading coefficient is
\[
  \begin{array}{l}
  z^6 (z-3) (z-1) (z+4) (z+5) (z+9) (z+15)^2 (z+24) (2 z+3) (2 z+15)\\
  \times (4 z+15) (7 z+60) q(z)
  \end{array}
\]
where $q(z)$ stands for a certain irreducible
polynomial of degree~$25$, and its indicial equation is
$\lambda^6(\lambda-1)^2$.  The full equation is too long to be printed
here, but can be found in~\cite{Koutschan11a}.
\end{theorem}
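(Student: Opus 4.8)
The plan is to follow the iterated creative-telescoping strategy already used for Theorems~\ref{thm.fcc4} and~\ref{thm.fcc5} and to push it one dimension further. First I would apply the substitutions $\cos k_j\to x_j$ to the six-fold integral, which transforms the integrand into
\[
  f(x_1,\dots,x_6,z) =
  \frac{1}{\big(1-\frac{z}{15}(x_1x_2+x_1x_3+\dots+x_5x_6)\big)\cdot
           \prod_{j=1}^6\sqrt{1-x_j^2}}.
\]
This $f$ is $\d$-finite: for each of $x_1,\dots,x_6,z$ there is a first-order ODE for $f$, and together these operators generate a zero-dimensional annihilating left ideal, of which a Gr\"obner basis is computed with the \texttt{HolonomicFunctions} package.

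Next I would eliminate the six integration variables one after the other, exactly as in the proof of Theorem~\ref{thm.fcc4}. At the $j$-th step one searches for a telescoper $A_j$ that is free of $x_j$ (but may still involve $x_{j+1},\dots,x_6,z$ and their derivations) together with a delta part $B_j$ so that $A_j+\D{x_j}B_j$ lies in the left ideal obtained at the previous step; this membership, which supplies the correctness certificate, is decided by reducing the operator to~$0$ modulo the current Gr\"obner basis. By the constant-term mechanism explained in Section~\ref{sec.diffeq} the telescopers $A_j$ annihilate the $j$-fold integral (equivalently, the arising integrals have natural boundaries, which one may additionally confirm by substituting the limits); one then recomputes a Gr\"obner basis of the ideal generated by the $A_j$ and passes to $x_{j+1}$. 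After the sixth reduction a single operator in $z$ and $\D{z}$ remains, and it annihilates $P(z)$. From this operator one reads off that its order is~$8$ and that its polynomial coefficients have degree~$43$; factoring the leading coefficient over $\mathbb{Q}[z]$ yields the displayed product together with an irreducible degree-$25$ cofactor $q(z)$, and computing the indicial polynomial at $z=0$ gives $\lambda^6(\lambda-1)^2$.

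The main obstacle is purely one of size. With each successive integration the intermediate ideals --- and, above all, the linear ans\"atze for the pairs $(A_j,B_j)$ --- grow rapidly, so that a naive implementation quickly exhausts the available memory; this is precisely why an ODE for the six-dimensional fcc lattice was long regarded as out of reach. To make the computation feasible I would (i) perform the integrations step by step rather than via a single operator of the form~\eqref{eq.ctmult}, (ii) choose a favourable order and grouping of the six eliminations and keep the monomial support of each telescoper ansatz as small as possible, and (iii) carry out the underlying linear algebra over the rationals by working modulo several primes and reconstructing the true rational-function coefficients only at the very end, so that the enormous coefficients never have to be manipulated directly. With these optimizations the whole calculation goes through, and since every telescoper is recorded together with its delta part, the accompanying Mathematica notebook~\cite{Koutschan11a} provides a complete, machine-checkable proof.
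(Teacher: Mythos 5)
Your proposal is correct and follows essentially the same route as the paper: the paper's proof of this theorem simply refers to the proof of Theorem~\ref{thm.fcc4} (substitution $\cos k_j\to x_j$, Gr\"obner basis of the annihilating ideal of the algebraic integrand, then six iterated creative-telescoping steps with certificates verified by ideal-membership reduction), with the full computation recorded in the supplementary Mathematica notebook~\cite{Koutschan11a}. Your additional remarks on managing the computational blow-up (stepwise integration, ordering of eliminations, modular linear algebra) are consistent with the paper's own comments on why this case required several days and certificates of several hundred megabytes.
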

\begin{proof}
The proof is very analogous to that of Theorem~\ref{thm.fcc4} and is given
in detail in the supplementary material~\cite{Koutschan11a}.
\end{proof}
As in the five-dimensional fcc lattice, the differential equation
of Theorem~\ref{thm.fcc6} lacks MUM and therefore is not Calabi-Yau.

\begin{corollary}
The LGF of the six-dimensional fcc lattice, evaluated at $z=1$ is
\[
  P(1)\approx 1.02774910062749883985936367927396850209243990900114872425172, 
\]
and therefore the return probability is
\[
  R\approx 0.02699987828795612426936417542619638021612262676239501413843. 
\]
\end{corollary}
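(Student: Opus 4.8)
The plan is to follow verbatim the route already taken for the four-dimensional lattice in Corollary~\ref{cor.d4}, now starting from the order-$8$ ODE for $P(z)$ furnished by Theorem~\ref{thm.fcc6}. Since the return probability is $R=1-1/P(1)$, the entire task reduces to evaluating the single real number $P(1)=\lim_{n\to\infty}f(n)$, where $f(n)=\sum_{k=0}^n p_k(\0)$ denotes the partial-sum sequence whose ordinary generating function is $P(z)/(1-z)$. Once $P(1)$ is known to sufficient precision, $R$ follows immediately by a single division and subtraction.

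First I would apply holonomic closure properties, namely the multiplication of the $\d$-finite function $P(z)$ by the $\d$-finite factor $1/(1-z)$, as implemented in \texttt{HolonomicFunctions}~\cite{Koutschan10b}, to obtain a linear ODE annihilating $P(z)/(1-z)$. Translating this ODE into the coefficient domain yields a linear recurrence with polynomial coefficients satisfied by $f(n)$. A short list of initial values $f(0),f(1),\dots$ is produced by counting excursions of small length in the six-dimensional fcc lattice through the step recurrence~\eqref{eq.trivrec}; the recurrence for $f(n)$ then propagates these to arbitrarily large indices.

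Next I would extract the asymptotic behaviour of $f(n)$ by the method of Wimp and Zeilberger~\cite{WimpZeilberger85} in its Mathematica implementation~\cite{Kauers11a}. Among the singularities appearing in the leading coefficient of Theorem~\ref{thm.fcc6}, the point $z=1$ has the smallest modulus, every other being strictly larger than~$1$ in absolute value; it is therefore the unique dominant singularity of $P(z)$, and hence of $P(z)/(1-z)$. The corresponding asymptotic basis consists of a constant solution $\equiv 1$, whose coefficient is precisely the sought limit $P(1)$, together with polynomially decaying corrections coming from the local structure at $z=1$, while all remaining basis elements decay geometrically. Computing this expansion to high order (say~$30$) and determining the connection coefficients by matching it against exact values of $f(n)$ generated from the recurrence yields $P(1)$, and thus $R$, to well over $100$ digits.

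The main obstacle is the connection problem: the recurrence fixes only the \emph{shape} of the admissible asymptotic solutions, not the actual coefficient of the constant term in the expansion of our particular sequence, and because $f(n)\to P(1)$ only polynomially (at the rate set by the subdominant corrections attached to $z=1$) a naive summation delivers very few correct digits. The remedy is to carry the asymptotic expansion to high enough order that the matching against recurrence-generated values accelerates convergence to the required precision; the repeated local exponents at the dominant singularity, possibly inducing logarithmic factors in the asymptotic basis, make this bookkeeping more delicate than in the four-dimensional case. As a safeguard, the resulting value can be cross-checked with the certified numerics of \texttt{NumGfun}~\cite{Mezzarobba10}, exactly as was done for Corollaries~\ref{cor.d4} and~\ref{cor.d5}.
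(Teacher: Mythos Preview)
Your proposal is correct and follows essentially the same approach as the paper: derive a recurrence for the partial sums $f(n)$ from the ODE of Theorem~\ref{thm.fcc6} via holonomic closure, compute initial values by direct counting, determine a basis of asymptotic solutions by the Wimp--Zeilberger method, and read off $P(1)$ as the coefficient of the constant solution. The paper does not spell out the six-dimensional case separately but treats it as a direct analogue of Corollary~\ref{cor.d4}; your additional remarks on the dominant singularity and on possible logarithmic corrections from the repeated indicial root are sensible elaborations, though note that the paper records a \texttt{NumGfun} cross-check only for dimensions four and five.
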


We want to conclude this section with an overview of our results
concerning the return probabilities, which reveals an interesting
dependence on the dimension of the lattice:
\begin{center}
\begin{tabular}{|c|l|}\hline
Dimension & Return Probability \\ \hline
2 & 1 \\
3 & 0.256318236504649 \\
4 & 0.095713154172563 \\
5 & 0.046576957463848 \\
6 & 0.026999878287956 \\ \hline
\end{tabular}
\end{center}

\section{Outlook}
While the calculations for Theorem~\ref{thm.fcc4} and
Theorem~\ref{thm.fcc5} are performed in a few minutes respectively hours,
it was a major effort of several days to compute the certificates that
prove Theorem~\ref{thm.fcc6}; they are several hundred MegaBytes in
size. With the methods described in this paper and with the current
hardware, it is completely out of the question to attack the fcc
lattice in seven dimensions.  An interesting question is whether the
pattern that showed up in dimensions four to six continues. This would
suggest a differential equation of order~$10$ with indicial equation
$\lambda^7(\lambda-1)^3$. But who knows?

For the three corollaries we computed the approximations for the return
probabilities with more than one hundred valid digits.  But we have no clue
what their exact values are. Banderier evaluated these numbers up to
several thousand digits~\cite{Banderier11}, but also he was unable to identify
the closed forms. So we leave these questions open, as a challenge for future
research.

\subsection*{Acknowledgments}
Anthony Guttmann aroused my interest in computing LGFs during his talk
at the 65th S\'{e}minaire Lotharingien de Combinatoire in Strobl,
Austria.  I would like to thank him for his encouragement and advice.
I am grateful to my colleague Manuel Kauers for interesting
discussions and for pointing me to the trick of multi-step guessing.
This research was carried out while the author was employed at the
Research Institute for Symbolic Computation (RISC), Linz, Austria.
The author was supported by the Austrian Science Fund (FWF): P20162-N18.

\bibliographystyle{unsrt}
\bibliography{literatur}

\end{document}